\numberwithin{equation}{section}
\newcommand{\beq}{\begin{equation}}
\newcommand{\eeq}{\end{equation}}
\newcommand{\beqs}{\begin{eqnarray*}}
\newcommand{\eeqs}{\end{eqnarray*}}
\newcommand{\beqn}{\begin{eqnarray}}
\newcommand{\eeqn}{\end{eqnarray}}
\newcommand{\beqa}{\begin{array}}
\newcommand{\eeqa}{\end{array}}
\def\lra{\longrightarrow}
\def\p{\prime}
\def\bc{\begin{center}}
\def\ec{\end{center}}
\def\p{\partial}
\def\l{\lambda}
\def\S{\Sigma}
\def\ZZ{{\mathbb Z}}
\def\QQ{{\mathbb Q}}
\def\RR{{\mathbb R}}
\def\CC{{\mathbb C}}
\def\begeq{\begin{equation}}
\def\endeq{\end{equation}}
\def\and{\quad{\rm and}\quad}
\let\lra=\longrightarrow
\def\mapright\#1{\,\smash{\mathop{\lra}\limits^{\#1}}\,}
\def\pbp{\sqrt{-1}\partial\bar\partial}
\newtheorem{prop}{Proposition}[section]
\newtheorem{theo}[prop]{Theorem}
\newtheorem{lem}[prop]{Lemma}
\newtheorem{rem}[prop]{Remark}
\newtheorem{ex}[prop]{Example}
\newtheorem{defi}[prop]{Definition}
\begin{document}

\title{K\"ahler-Ricci solitons on toric Fano orbifolds}

\subjclass{Primary: 53C25; Secondary:  53C55,
 58E11}
\keywords{K\"ahler-Ricci soliton, toric Fano orbifold,
K\"ahler-Einstein metric}

\author{ Yalong Shi}
\author{ Xiaohua Zhu*}

\address{Yalong Shi\\Department of Mathematics, Nanjing University,
 Nanjing, 210093, Jiangsu Province, China\\
shiyl@nju.edu.cn}

\address{ Xiaohua Zhu \\School of Mathematical Sciences, Peking University,
Beijing, 100871, China\\
 xhzhu@math.pku.edu.cn}

\thanks {  *Partially supported by  NSF10990013 in China.}

\maketitle

\begin{abstract}
  We prove the existence of K\"ahler-Ricci solitons on toric Fano orbifolds,
  hence extend the theorem of Wang and Zhu \cite{WZ} to the orbifold case.
\end{abstract}


\section{Introduction}\label{section:intro}

A complex orbifold of dimension $n$ is a Haudorff space $X$ with a
family of local uniformizing charts $\{(\tilde U, G, \varphi)\}$.
Here, $\tilde U$ is an open subset of $\CC^n$, $G$ is a finite group
of bi-holomorphic transformations of $\tilde U$, and $\varphi$ is a
continuous map from $\tilde U$ to an open set $U\subset X$, such
that it induces a homeomorphism $\tilde U/ G \to U$. The notion of
orbifold was first introduced by Satake (\cite{Sa1} ) in the name of
``V-manifold" in the 1950's. An orbifold is a generalization of
manifold, and we can define orbifold-smooth functions and maps by
requiring that the corresponding liftings to the local uniformizing
charts can be extended smoothly to the whole of the charts. Many
theorems on manifolds, like Hodge decomposition theorem and Kodaira
imbedding theorem, were generalized to orbifolds
by Baily and Satake in \cite{Bai1}, \cite{Bai2} and \cite{Sa2}. The analysis
on orbifolds is also studied by many people, for example, see \cite{Ch}
for a discussion of Sobolev spaces.\\

On a complex orbifold $X$, one can define orbifold K\"ahler metrics
and corresponding Ricci forms as on manifolds. The Ricci form is a
closed form and hence defines a cohomology class in the Dolbeault
group, and we call it the first Chern class of $X$, denoted by
$c_1(X)$.

\begin{defi}
 A complex normal variety $X$ with only orbifold singularities is called
Fano if the Weil divisor $-K_X$ is an ample $\QQ$-Cartier divisor,
i.e. a multiple of $-K_X$ is ample Cartier. Equivalently, ( by
Baily's embedding theorem \cite{Bai2}) $X$ is called a Fano orbifold
if one can represent $c_1(X)$ by an orbifold K\"ahler form. $X$ is
called Gorenstein Fano if  $-K_X$ itself is an ample Cartier
divisor.
\end{defi}

 Like on manifolds,
a fundamental problem in differential gemoetry of orbifolds is the
existence of canonical metrics, like
 the Einstein metrics. In this paper, we study the existence of  K
 \"ahler-Ricci solitons on  Fano orbifolds.
\begin{defi}
Let $X$ be a Fano orbifold and $\omega_g$ be the K\"ahler form of a
K\"ahler metric $g$ on $X$ with $\frac{1}{2\pi}[\omega_g]=c_1(X)>0$,
where
$$\omega_g:=\sqrt{-1}g_{i\bar j}dz^i\wedge d\bar z^j.$$
$\omega_g$  is called a K\"ahler-Ricci soliton if there is an
orbifold holomorphic vector field $v$ such that
\beq\label{eqn:soliton} Ric(\omega_g)-\omega_g=L_v \omega_g, \eeq
where $Ric(\omega_g)$ is the Ricci form of $\omega_g$ defined by
$$Ric(\omega_g):=-\pbp \log \det (g_{i\bar j}).$$
\end{defi}

Here by ``an orbifold holomorphic vector field", we mean a
holomorphic vector field $v$ on the regular part $X_{reg}$, such
that for any local uniformizing chart $\pi:\tilde U\to U\subset X$,
the lifting of $v$ extends to the whole of $\tilde U$ as a
holomorphic vector field on $\tilde U$, see \cite{Bai2}.  The Lie
derivative of a form $\eta$ with repect to a complex vector field
$v$ is defined by the Cartan formula
$$L_v \eta:= di_v \eta +i_v d\eta.$$

\begin{rem}
  Since the singular set of a normal variety always has codimension at least 2, by a standard extension theorem
  in complex analysis, any holomorphic vector field $v$ on the
  $X_{reg}$ of a normal orbifold $X$  is an orbifold holomorphic vector field on $X$.
\end{rem}

The K\"ahler-Ricci soliton is a generalization of K\"ahler-Einstein
metric, it is also conjectured to be the limit of K\"ahler-Ricci
flow (\cite{T2}). The main result of this paper is the following
theorem:

\begin{theo}\label{thm:main}
 For any toric Fano orbifold $X$, there exists a $T$-invariant K\"ahler-Ricci soliton metric, the soliton metric is  Einstein
if and only if the Futaki invariant of $X$ vanishes.
\end{theo}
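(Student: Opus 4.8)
The plan is to adapt the continuity-method argument of Wang–Zhu \cite{WZ} to the orbifold setting, the key point being that all the analytic tools they use have orbifold analogues. First I would reduce the problem to a complex Monge–Ampère equation: writing $\omega_g = \omega + \pbp\varphi$ for a fixed $T$-invariant reference K\"ahler form $\omega$ representing $c_1(X)$, equation \eqref{eqn:soliton} becomes, after applying $i_v$ and using that $v$ is the holomorphic vector field generating the one-parameter subgroup whose imaginary part lies in $T$, the modified Monge–Ampère equation
\beq
(\omega + \pbp\varphi)^n = e^{h_\omega - \theta_v(\varphi) - \varphi}\,\omega^n,
\eeq
where $h_\omega$ is the Ricci potential of $\omega$ and $\theta_v(\varphi)$ is the (normalized) potential of $v$ with respect to $\omega+\pbp\varphi$. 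The holomorphic vector field $v$ should be chosen as in Tian–Zhu so that the modified Futaki invariant vanishes in the direction of $v$; on a toric orbifold this $v$ is explicitly the one associated to the unique critical point of a convex function on the moment polytope, exactly as in \cite{WZ}.

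Next I would set up the continuity path $(\omega+\pbp\varphi_t)^n = e^{h_\omega - \theta_v(\varphi_t) - t\varphi_t}\omega^n$ for $t\in[0,1]$, and solve it by the usual openness/closedness dichotomy. Openness follows from the implicit function theorem on orbifold H\"older spaces, using that the linearized operator is (a zeroth-order perturbation of) the Laplacian of $\omega+\pbp\varphi_t$, which is invertible on orbifolds by the Hodge theory of Baily \cite{Bai2}; for $t<1$ the $-t\varphi_t$ term makes the linearization strictly negative so invertibility is immediate, while at $t=1$ one uses the vanishing of the modified Futaki invariant to kill the kernel, again word-for-word as in the manifold case. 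Closedness is the analytic heart: one needs a priori $C^0$, then $C^2$, then $C^{2,\alpha}$ estimates for $\varphi_t$ uniform in $t$. The zero- and higher-order estimates are obtained from the partial $C^0$-estimate / Moser iteration and Yau-type second-order estimates; the crucial $C^0$-estimate is where toric symmetry enters, and here I would invoke the orbifold version of the Wang–Zhu estimate: the function $\varphi_t$, being $T$-invariant, descends to a convex function on the moment polytope, and the soliton equation forces its Legendre transform to satisfy a bound coming from the geometry of the polytope (the barycenter condition relative to the measure $e^{\langle v,\cdot\rangle}$). All of these arguments are local near the orbifold charts plus global integration, so the finite quotient singularities do not obstruct them — one only needs Sobolev and elliptic estimates on orbifolds, which hold.

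The main obstacle I anticipate is making the toric $C^0$-estimate genuinely rigorous in the orbifold category: one must check that the correspondence between $T$-invariant K\"ahler potentials and convex functions on the moment polytope, and the integration-by-parts identities on the polytope that drive the Wang–Zhu argument, go through when $X$ has orbifold singularities. The point is that a toric Fano orbifold still corresponds to a rational polytope (a Fano polytope), the open dense torus orbit is a genuine manifold $(\CC^*)^n$, and the orbifold structure is concentrated on the toric boundary divisors; since the $C^0$-estimate only uses the torus-orbit picture plus properness of the relevant functional on the polytope, it transfers directly. Finally, once $\varphi_1$ is obtained with uniform estimates, elliptic bootstrapping in orbifold H\"older spaces gives a smooth orbifold K\"ahler-Ricci soliton; it is K\"ahler-Einstein precisely when $v=0$ can be taken, i.e. when the ordinary Futaki invariant of $X$ vanishes, by the Tian–Zhu uniqueness of the soliton vector field.
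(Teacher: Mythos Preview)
Your proposal is correct and follows essentially the same route as the paper: reduce the soliton equation via the $T$-action to a real Monge--Amp\`ere equation for a convex function on $\RR^n$ (equivalently on the moment polytope), fix the soliton vector field by the weighted-barycenter condition on $P$, and run the continuity method with the $C^0$-estimate coming from convexity arguments on $\RR^n$/$P$ exactly as in Wang--Zhu. The one technical point you did not anticipate is that for the \emph{lower} bound on $\varphi$ the paper does not reproduce the original Wang--Zhu Harnack argument but instead uses Donaldson's shortcut: bound $\|Du\|_{L^p(P)}$ for the Legendre transform $u$ of $\phi$ and apply the ordinary Sobolev embedding on the polytope $P$, which sidesteps any orbifold-specific analysis entirely.
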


\begin{rem}
  The uniqueness theorem of Tian and Zhu in \cite{TZ1} and
  \cite{TZ2} should also hold for Fano orbifolds. The uniqueness of K\"ahler-Einstein metrics
  on Fano orbifolds is true indeed.
\end{rem}

For toric Fano manifolds, the existence of K\"ahler-Ricci solitons
was proved by Wang and Zhu in \cite{WZ}. In \cite{Na}, Nakagawa also
solved the existence problem of K\"ahler-Einstein metrics on toric
Gorenstein Fano orbifolds in dimension 2. He also conjectures that
the vanishing of Futaki invariant should be a sufficient condition
for the existence of K\"ahler-Einstein metrics on toric Gorenstein
Fano orbifolds and in this case the automorphism group is reductive.
Our Theorem \ref{thm:main} confirms a generalized version of his conjecture.\\

The organization of this paper is as follows. In section
\ref{section:Guillemin}, we review relevant results on toric
varieties. In section \ref{section:equation}, we use the torus
action to rewrite the soliton equation as a real Monge-Amp\`ere
equation. Finally in section \ref{section:existenc}, we prove the
main theorem by establishing various a priori estimates along the
same lines as \cite{WZ}. Then we give two examples of toric Fano orbifolds.\\

{\bf Acknowledgements } This work was partially done  when both
authors were visiting the math department of USTC in the fall of
2010. The authors would like to thank Professor Chen Xiuxiong for
his invitation. The first named author also thanks Professor Xu Bin,
Zhang Wei
and Doctor Xu Jinxing for helpful discussions.\\

\section{K\"ahler metrics on a toric Fano
variety}\label{section:Guillemin}

Let $N\cong \ZZ^n$ be a lattice, and $Q\subset N_\RR\cong \RR^n$ a
convex lattice polytope, i.e., the vertices of $Q$ are element of
$N$. Suppose $Q$ contains the origin in its interior, then the cones
over the faces of $Q$ form a complete fan. The toric variety $X_Q$
associated to this fan is a normal projective variety. When all the
vertices of $Q$ are primitive and all the faces of $Q$ are
simplicial, then $X_Q$ is a Fano orbifold. Conversely, all the toric
Fano orbifolds are obtained in this way (see \cite{Deb}). In
particular, for any $n\geq 2$, there are infinitely many isomorphism
classes of
toric Fano orbifolds of dimension $n$.\\

Let $X_Q$ be a toric Fano orbifold as above. Let $M:=Hom_\ZZ(N,\ZZ)$
be the dual of $N$, and $P\subset M_{\RR}$ the dual of $Q$ defined
by
$$P:=\{y\in M_{\RR}|~~\langle y,x\rangle\geq -1, \forall x\in Q\}.$$
The polytope also contains the origin in its interior, and $X_Q$ is
Gorenstein Fano if and only if $P$ is also a lattice polytope. The
isomorphism classes of toric Gorenstein Fano varieties of any
dimension are finite. For dimension 2, there are exact 16 different
classes, see \cite{Na} or Chapter 8 of \cite{CLS}.\\

From a differential geometric viewpoint, the dual polytope $P$ is
more important. Since the faces of $Q$ are simplicial, at every
vertex of $P$ there are precisely $n$ facets meeting at this vertex.
Hence $P$ is a ``rational simple polytope" of Lerman and Tolman
\cite{LT}. In \cite{LT}, the authors showed that for any rational
simple polytope $P$, \footnote{In \cite{LT}, the authors consider
``labeled rational simple polytopes". In our case, all the labels
equal to 1.} there is a K\"ahler toric orbifold obtained by a
symplectic reduction construction, and this orbifold is isomorphic
to the toric variety associated to the dual fan of $P$ (See Theorem
1.7 of \cite{LT}). In our case, $P$ is the dual of $Q$, hence the
K\"ahler toric orbifold obtained by Lerman and Tolman is exactly
$X_Q$. In particular, $X_Q$ has a ``canonical" K\"ahler metric. By
the work of Guillemin and Abreu \cite{Gu}, \cite{Ab} (see also
\cite{CDG} and \cite{BGL}), this K\"ahler metric (called the
``Guillemin metric") has a nice expression using the combinatorial
data of $P$. When $X_Q$ is Fano, the K\"ahler form of the Guillemin
metric is in the Dolbeault class $2\pi c_1(X_Q)$ . Now we review
Guillemin and Abreu's result in our case.\\

In the following of this paper, we fix a lattice polytope $Q$ as
above and write $X:=X_Q$. We denote the complex torus by
$T_{\CC}\cong (\CC^\ast) $, which is an open dense subset of $X$,
with the standard coordinates $(z^1,\dots,z^n)$. We also denote by
$T$ the maximal torus subgroup
$T:=\{(e^{i\theta_1},\dots,e^{i\theta_n})|\ \theta_i\in \RR\}$. A
$T$-invariant function $\phi$ on $T_{\CC}$ can be viewed as a
function of $x=(x^1,\dots, x^n)$, where $x^i:=\log |z^i|^2$, so we
can identify it as a function in $\RR^n$. In this case, we have
$$\pbp \phi=\sqrt{-1}\sum_{i,j}\phi_{ij}\frac{dz^i}{z^i}\wedge\frac{d\bar z^j}{\bar z^j}.$$
In particular, when $\phi$ is a potential function of a K\"ahler
metric, we have $g_{i\bar j}=\phi_{ij}\frac{1}{z^i}\frac{1}{\bar
z^j}$, thus
$$\pbp \log \det(g_{i\bar j})=\pbp \log \det(\phi_{ij}).$$
\mbox{}

 Let the vertices of $Q$ be $\mathbf{n}^{(i)}\in N$,
$i=1,\dots, d$. Then we have
$$P=\{y\in M_{\RR}|~ \langle y, \mathbf{n}^{(i)}\rangle\geq -1,\ i=1,\dots, d\}.$$
In the interior of $P$ (denoted by $P^o$) we define \beq
\label{eqn:l_i} l_i(y):= \langle y, \mathbf{n}^{(i)}\rangle+1,\eeq
and \beq \label{eqn:u^0} u^0:=\sum_i l_i \log l_i. \eeq It is easy
to check that $u^0$ is strictly convex in $P^o$, and the gradient
map $Du^0$ is a diffeomorphism to $\RR^n$. We denote the Legendre
transform of $u^0$ by $\phi^0$, i.e.
$$\phi^0(x)=\langle Du^0(y), y \rangle-u^0(y)=\sum_i\big{(}l_i(y)-\log l_i(y)\big{)}-d,$$
where $x\in \RR^n$ and $y\in P^o$ are related by $x=Du^0(y)$.
 Then $\phi^0$ is a strictly convex smooth function on
$\RR^n$, and the Guillemin metric is given by $\omega_0=\pbp \phi^0
$.\\

We have the following properties of the Guillemin metric, which is
used in the next section.

\begin{lem} \label{lemma:logdetD2}
  We have $$|\log \det D^2 \phi^0 + \phi^0|\leq C $$  in
  $\RR^n$.
\end{lem}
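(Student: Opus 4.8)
The idea is to compute both quantities directly in terms of the combinatorial data $l_i$ and then compare. Recall the Legendre duality: if $\phi^0$ and $u^0$ are Legendre dual, with $x = Du^0(y)$, then $D^2\phi^0(x) = (D^2 u^0(y))^{-1}$, so $\log\det D^2\phi^0(x) = -\log\det D^2 u^0(y)$. Also $\phi^0(x) = \langle Du^0(y),y\rangle - u^0(y) = \sum_i(l_i - \log l_i) - d$, which is already exhibited in the excerpt. So the whole estimate becomes an estimate on $P^o$:
\[
\Big| -\log\det D^2 u^0(y) + \sum_i \big(l_i(y) - \log l_i(y)\big) - d \Big| \le C \quad \text{on } P^o.
\]

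First I would compute $D^2 u^0$. Since $l_i(y) = \langle y,\mathbf n^{(i)}\rangle + 1$ is affine, $\partial_{jk} l_i = 0$ and $\partial_j l_i = (\mathbf n^{(i)})_j =: n^{(i)}_j$. Differentiating $u^0 = \sum_i l_i\log l_i$ twice gives
\[
\partial_{jk} u^0 = \sum_i \frac{n^{(i)}_j n^{(i)}_k}{l_i(y)},
\]
i.e. $D^2 u^0(y) = \sum_i \frac{1}{l_i(y)}\, \mathbf n^{(i)}\otimes \mathbf n^{(i)}$. So the matrix entries blow up like $1/l_i$ exactly when $y$ approaches the facet $\{l_i = 0\}$. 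The key point is then to show
\[
\Big| \log\det\Big(\sum_i \tfrac{1}{l_i}\,\mathbf n^{(i)}\otimes\mathbf n^{(i)}\Big) + \sum_i \log l_i \Big| \le C,
\]
since the bounded terms $\sum_i l_i - d$ in $\phi^0$ are uniformly bounded on the compact polytope $P$ and can be absorbed into $C$. Equivalently, writing $A(y) := \big(\prod_j l_j(y)\big)\sum_i \frac{1}{l_i}\mathbf n^{(i)}\otimes \mathbf n^{(i)} = \sum_i \big(\prod_{j\ne i} l_j\big)\mathbf n^{(i)}\otimes\mathbf n^{(i)}$, I must show $|\log\det A(y)|\le C$ on $P^o$; but $A(y)$ is a matrix whose entries are polynomials in $y$, hence $\det A(y)$ is a polynomial, bounded above on the compact set $\bar P$. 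The real content is the lower bound: $\det A(y)$ is bounded away from $0$. By homogeneity considerations, near a vertex $v$ of $P$ only the $n$ facets through $v$ have $l_i \to 0$; for those $n$ indices the corresponding $\mathbf n^{(i)}$ are linearly independent (this is precisely the simpliciality hypothesis: $n$ facets of $P$ meet at each vertex, with primitive normals spanning), and for each such $i$ the product $\prod_{j\ne i} l_j$ stays bounded below because it omits the one factor tending to $0$. A short local analysis near each vertex, edge, and lower-dimensional face — or better, a clean argument via the stratification of $\partial P$ by which subsets of the $l_i$ vanish — shows $\det A$ does not vanish on $\bar P$, using linear independence of the relevant normals on each stratum. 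Compactness of $\bar P$ then gives the uniform constant $C$.

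The main obstacle is the lower bound on $\det A(y)$ near the boundary strata of $P$: one must organize the combinatorics so that on each face of codimension $k$, the matrix $A$ "sees" $k$ blowing-up directions $\mathbf n^{(i)}$ that are linearly independent (guaranteed by simpliciality of $Q$, equivalently simplicity of $P$) weighted by products of the remaining $l_j$'s that stay bounded below, while the remaining $n-k$ directions are supplied by a bounded, uniformly nondegenerate contribution. A clean way to package this is: fix a face $F$ of $P$ of codimension $k$, relabel so that $l_1,\dots,l_k$ are the ones vanishing on $F$; expand $\det A$ by multilinearity and track the leading order in the distances $l_1,\dots,l_k$; the coefficient of the leading term is (up to a positive factor from $\prod_{j>k} l_j$ evaluated near $F$, which is bounded below on a neighborhood) a sum of squares of $k\times k$ minors of $(\mathbf n^{(1)}\cdots\mathbf n^{(k)})$ times the Guillemin-type nondegeneracy in the complementary directions, and this is strictly positive by the independence of $\mathbf n^{(1)},\dots,\mathbf n^{(k)}$. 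Patching these neighborhoods by compactness completes the argument. This is exactly the orbifold analogue of the manifold computation in \cite{Gu}, \cite{Ab}, the only new input being that simpliciality, rather than smoothness, is what guarantees the needed linear independence.
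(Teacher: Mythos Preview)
Your overall strategy (pass to $P$ via Legendre duality, compute $D^2u^0=\sum_i l_i^{-1}\mathbf n^{(i)}\otimes\mathbf n^{(i)}$, and exploit simpliciality near $\partial P$) matches the paper's, but there is a genuine algebraic slip in your reformulation. You set $A(y)=(\prod_j l_j)\,D^2u^0(y)$ and claim the estimate is equivalent to $|\log\det A|\le C$. This is wrong: for an $n\times n$ matrix one has $\det\!\big((\prod_j l_j)D^2u^0\big)=(\prod_j l_j)^n\det D^2u^0$, whereas what you actually need to bound is $(\prod_j l_j)\det D^2u^0$. These differ by $(\prod_j l_j)^{n-1}$, which vanishes on $\partial P$, so in fact $\det A(y)\to 0$ at every vertex of $P$ when $n\ge 2$; your assertion that ``$\det A$ does not vanish on $\bar P$'' is false, and no stratification argument will rescue it.

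The fix is not to clear denominators at the matrix level but at the determinant level. The paper does this by a direct Cauchy--Binet computation:
\[
\det D^2u^0
=\sum_{1\le i_1<\cdots<i_n\le d}\frac{\det(\mathbf n^{(i_1)},\dots,\mathbf n^{(i_n)})^2}{l_{i_1}\cdots l_{i_n}},
\]
so that
\[
\Big(\prod_j l_j\Big)\det D^2u^0
=\sum_{i_1<\cdots<i_n}\det(\mathbf n^{(i_1)},\dots,\mathbf n^{(i_n)})^2\!\!\prod_{j\notin\{i_1,\dots,i_n\}}\!\! l_j,
\]
a polynomial that is a sum of nonnegative terms, manifestly bounded above on the compact $\bar P$. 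For the lower bound, at any $y\in\bar P$ the set $I(y)=\{i:l_i(y)=0\}$ has $|I(y)|\le n$ with $\{\mathbf n^{(i)}\}_{i\in I(y)}$ linearly independent (simplicity of $P$); extending $I(y)$ to an $n$-tuple $J$ with $\det(\mathbf n^{(j)})_{j\in J}\ne 0$ gives a single term in the sum that is strictly positive at $y$, and compactness finishes. This replaces your face-by-face stratification with a one-line argument and avoids the determinant-scaling error.
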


\begin{proof}
 Note that the gradient map $D\phi^0$ is a diffeomorphism from
  $\RR^n$ to the interior of $P$, we can work on the polytope $P$. By the property of Legendre transforms, we know
  that for any $x\in \RR^n$, there is a unique $y\in P$ such that
  $y=D\phi^0(x)$ and $x=Du^0(y)$, moreover, we have
  $$\phi^0(x)=\langle Du^0(y), y \rangle-u^0(y),$$ and
  $$\det (D^2\phi^0)(x)=\det (D^2u^0)^{-1}(y).$$
  By (\ref{eqn:u^0}), it suffices to bound $\log \det (D^2 u^0)+\sum_i \log
  l_i$.

  By (\ref{eqn:l_i}) and (\ref{eqn:u^0}), we have
  $$(u^0)_{pq}=\sum_i \frac{\mathbf{n}_p^{(i)}\mathbf{n}_q^{(i)}}{l_i}.$$
  A direct computation shows that
  $$\det(D^2u^0)=\sum_{1\leq i_1<\dots < i_n \leq d}\frac{\det(\mathbf{n}^{(i_1)},\dots,\mathbf{n}^{(i_n)} )^2 }{l_{i_1}  \dots  l_{i_n}}.$$
  Then the lemma follows easily from this expression.\end{proof}
  \mbox{}

\begin{lem}\label{lem:asym}
  Let the vertices of $P$ be $p^{(1)},\dots,p^{(m)}$, and define
  $$v(x):=\max_k\{\langle x, p^{(k)}\rangle\}.$$
  Then we have
  $$|\phi^0-v|\leq C$$
  in $\RR^n$.
\end{lem}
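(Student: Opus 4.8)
The plan is to work on the dual side via the Legendre transform, exactly as in the proof of Lemma 2.4. Recall that $\phi^0$ is the Legendre transform of $u^0 = \sum_i l_i \log l_i$, so for each $x \in \RR^n$ there is a unique $y \in P^o$ with $x = Du^0(y)$ and $\phi^0(x) = \langle x, y\rangle - u^0(y)$. The key observation is that as $x$ ranges over $\RR^n$, the point $y = D\phi^0(x)$ ranges over the interior $P^o$, and the function $v(x) = \max_k \langle x, p^{(k)}\rangle$ is precisely the support function of $P$, so $v(x) = \sup_{y \in P}\langle x, y\rangle$. Since $P$ is bounded and $\phi^0(x) = \langle x, D\phi^0(x)\rangle - u^0(D\phi^0(x))$ with $D\phi^0(x) \in P$, we immediately get $\phi^0(x) \le v(x) - \inf_{P^o} u^0 + o(1)$-type bounds; the real content is the matching lower bound, i.e. that $y = D\phi^0(x)$ approaches the vertex of $P$ that realizes the max in $v(x)$, fast enough that the error $u^0(y)$ stays bounded.

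First I would record that $u^0 = \sum_i l_i \log l_i$ is bounded on all of $P$: each $l_i$ is an affine function that is nonnegative on $P$ and bounded above, and $t \log t \to 0$ as $t \to 0^+$, so $u^0$ extends continuously to $\overline{P}$ with $0 \le u^0 \le C$ there. Hence $|\phi^0(x) - \langle x, D\phi^0(x)\rangle| \le C$ uniformly, and since $D\phi^0(x) \in P$ we get $\phi^0(x) \le v(x) + C$ at once. For the lower bound, I would use convexity duality: for any vertex $p^{(k)}$ of $P$ and any $x$, the Young inequality gives $\langle x, p^{(k)}\rangle \le \phi^0(x) + u^0(p^{(k)})$ — but $u^0(p^{(k)})$ is finite (bounded by $C$ by the previous step) — so $\langle x, p^{(k)}\rangle \le \phi^0(x) + C$ for every $k$, and taking the max over $k$ yields $v(x) \le \phi^0(x) + C$. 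Combining the two inequalities gives $|\phi^0 - v| \le C$.

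The main obstacle — really the only nontrivial point — is justifying that $u^0$ is genuinely bounded (not just finite) at the vertices of $P$, i.e. that the Young/Fenchel inequality $\langle x, p\rangle \le \phi^0(x) + u^0(p)$ remains valid when $p = p^{(k)}$ is a boundary point of $P$ where $D\phi^0$ never actually lands. This follows because the Fenchel inequality $\langle x, y\rangle \le \phi^0(x) + (\phi^0)^*(y)$ holds for all $x, y$ with $(\phi^0)^* $ the convex conjugate, and $(\phi^0)^*$ agrees with $u^0$ on $P^o$ and extends to its lower-semicontinuous closure on $\overline P$, which at the vertices $p^{(k)}$ takes the finite value $u^0(p^{(k)}) = \sum_i l_i(p^{(k)}) \log l_i(p^{(k)}) \le C$ since each $l_i(p^{(k)}) \in [0, C]$. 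So no delicate asymptotic analysis of $D\phi^0$ near the vertices is needed; boundedness of $u^0$ on the compact polytope $\overline P$ does all the work. I would then remark that this is the precise analogue of the estimate $|\phi^0 - v| \le C$ used by Wang and Zhu in \cite{WZ} for the manifold case, and the proof is identical once one knows $\overline P$ is a bounded polytope and $u^0$ extends continuously and boundedly to it.
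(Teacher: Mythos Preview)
Your argument is correct, and for the upper bound $\phi^0 \le v + C$ it coincides with the paper's: both use $D\phi^0(x)\in P$ together with the boundedness of $u^0$ on $\overline P$. (One minor slip: you write $0\le u^0\le C$, but $t\log t$ is negative for $0<t<1$, so the correct statement is $|u^0|\le C$ on $\overline P$. This does not affect the rest.)

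For the lower bound $v\le \phi^0+C$ the two proofs differ. The paper computes explicitly: writing $x=Du^0(y)=\sum_i(1+\log l_i(y))\,\mathbf n^{(i)}$ and choosing $k_0$ with $v(x)=\langle x,p^{(k_0)}\rangle$, one gets
\[
v(x)-\phi^0(x)=\sum_i \log l_i(y)\,\bigl(1+\langle \mathbf n^{(i)},p^{(k_0)}\rangle\bigr)+C',
\]
and then observes that each coefficient $1+\langle \mathbf n^{(i)},p^{(k_0)}\rangle=l_i(p^{(k_0)})\ge 0$ while $\log l_i$ is bounded above on $P^o$. Your route is instead pure convex duality: the Fenchel--Young inequality $\langle x,p^{(k)}\rangle\le \phi^0(x)+(\phi^0)^*(p^{(k)})$, combined with $(\phi^0)^*\le u^0$ on $\overline P$ (by lower semicontinuity of the conjugate and the continuous extension of $u^0$), immediately gives $v(x)\le \phi^0(x)+\max_k u^0(p^{(k)})$. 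This is cleaner and more general---it uses only that $u^0$ is convex on $P^o$ and extends continuously to the compact polytope $\overline P$, and never touches the explicit form of $Du^0$. The paper's computation, by contrast, makes visible \emph{why} the bound holds in terms of the facet data: the potentially dangerous $\log l_i(y)\to -\infty$ terms are killed precisely because $l_i(p^{(k_0)})\ge 0$, i.e.\ because the maximizing vertex $p^{(k_0)}$ lies in $\overline P$. The two arguments are really the same inequality (the supporting-hyperplane inequality for $u^0$ at $y$ evaluated at $p^{(k)}$) viewed from opposite ends.
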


\begin{proof}
  We also work on the polytope $P$. Let $y\in P$ be the unique
  point such that
  $y=D\phi^0(x)$ and $x=Du^0(y)$. So we have
  $$\phi^0(x)=\langle Du^0(y), y\rangle-u^0(y)\leq v(x)-u^0(y)\leq v(x)+C.$$

  On the other hand, suppose $v(x)=\langle x,p^{(k_0)}\rangle$, then
  we have
  $$v(x)=\langle \sum_i(1+\log l_i(y))\mathbf{n}^{(i)},p^{(k_0)}\rangle,$$
  thus
  $$v(x)-\phi^0(x)\leq \sum_i \log l_i(y)(1+\langle\mathbf{n}^{(i)}, p^{(k_0)}\rangle)+C'\leq C,$$
  since $1+\langle\mathbf{n}^{(i)}, p^{(k_0)}\rangle$ is
  nonnegative and the $l_i$'s are bounded from above on $P$.
\end{proof}
\mbox{}

\section{K\"ahler-Ricci soliton equation on toric Fano orbifolds }\label{section:equation}

We start with the general soliton equation. Let $\omega_0$ be the
K\"ahler form of the Guillemin metric $g^0_{i\bar j}$, and
$$\omega_g=\omega_0+\pbp \varphi,$$
where $\varphi$ is a smooth function on the regular part of $X$ such
that the pull-back of $\omega_g$ on any local uniformizing chart
$\tilde U$ extends to a K\"ahler form on $\tilde U$, that is,
$\omega_g$ is an orbifold K\"ahler form. For an orbifold holomorphic
vector field $v$, we have
$$L_v \omega_g=L_v\omega_0+\pbp v(\varphi).$$
By Hodge decomposition theorem on orbifolds (\cite{Bai2}), there is
a unique complex valued function $\theta_v$ such that
$$i_v \omega_0 =\sqrt{-1}\bar\partial \theta_v,$$
with the normalization condition $\int_X
\exp(\theta_v)\omega_0^n=\int_X \omega_0^n$. From this, we have
$$L_v \omega_0 = \pbp \theta_v.$$

Let $h$ be the Ricci potential of $\omega_0$, namely
$$Ric(\omega_0)-\omega_0=\pbp h,$$
with $\int_X \exp(h)\omega_0^n=\int_X \omega_0^n$. Then the equation
(\ref{eqn:soliton}) becomes
$$\pbp\big{(}\log\frac{\det(g^0_{i\bar j}+\varphi_{i\bar j})}{ \det (g^0_{i\bar j})}
-h+\varphi+\theta_v+v(\varphi)\big{)}=0,$$ thus
\beq\label{eqn:soliton-potential} \det(g^0_{i\bar j}+\varphi_{i\bar
j})=\det (g^0_{i\bar j})e^{h-\theta_v -v(\varphi)-\varphi}. \eeq
\mbox{}

Now we use the special symmetry of the toric variety. Suppose
$\omega_g$ is also $T$-invariant, then the restriction of $\varphi$
to $T_{\CC}$ can be viewed as a function in $\RR^n$, so we have
$$g_{i\bar j}=\phi_{ij}\frac{1}{z^i}\frac{1}{\bar z^j},$$
where $\phi:=\phi^0+\varphi$. Let $v:=\sum_i c_i z^i \frac{\p}{\p
z^i}$, then one can also check that
$$v(\varphi)=\sum_i c_i \varphi_i$$
and
$$\theta_v=\sum_i c_i \phi^0_i-c_v$$
for some constant $c_v$. Moreover, by Lemma \ref{lemma:logdetD2}, we
know that there is a constant $\tilde c$ such that
$$\log\det (D^2\phi^0)+\phi^0+h=\tilde c,$$thus we have

\beq \label{eqn:soliton-reduction} \det(\phi_{ij})=e^{-c-\phi-\sum_i
c_i\phi_i}, \eeq where the constant $c$ depends only on the initial
metric $g^0$ and the holomorphic vector field $v=\sum_i c_i z^i
\frac{\p}{\p z^i}$.\\

Now we are in a position to determine the constants $c_i$'s.

{\prop \label{prop:c_i}

 The necessary condition to  have a solution of
(\ref{eqn:soliton-reduction})  is  that the $c_i$'s satisfy the
equations \beq\label{eqn:c_i} \int_P y^i e^{\sum_l c_l y^l}
dy=0,\quad i=1,\dots, n. \eeq }
\begin{proof}
Let $\phi$ be a solution of (\ref{eqn:soliton-reduction}).  Then by
(\ref{eqn:soliton-reduction}) we have

$$\int_P y^i e^{\sum_l c_l y^l} dy= \int_{\RR^n} \phi_i e^{\sum_l c_l \phi_l}\det(\phi_{ij}) dx=
e^{-c}\int_{\RR^n} \frac{\p}{\p x^i} (e^{-\phi})dx=0.$$

\end{proof}

Since $P$ contains the origin in its interior, it is easy to see
that the $c_i$'s exist and are uniquely determined by
(\ref{eqn:c_i}), see, for example \cite{WZ} or \cite{Do}. Actually,
one needs only to consider the convex function $F(s_1,\dots,
s_n):=\int_P e^{\sum_l s_l y^l} dy$. Suppose $B_r(0)\subset P$ is a
ball in $P$, and set $\S_r:=\{y\in B_r(0) \big{|}~ y^1\geq
\frac{1}{2}|y|\}$, then $$F(s)\geq
\int_{\S_r}e^{\frac{1}{2}|s||y|}dy,$$ that is,  $F$ is proper. Hence
there is a unique minimum point $(c_1,\dots, c_n)$ of $F$,  which
satisfies
(\ref{eqn:c_i}).\\

 When
$c_i=0$ for all $1\leq i\leq n$, i.e., the barycenter of $P$ is the
origin, then the vector field $v=0$ and the soliton equation
(\ref{eqn:soliton-potential}) becomes the K\"ahler-Einstein
equation.

\begin{prop}
  The barycenter of $P$ is the origin if and only if the Futaki
  invariant of $X$ vanishes.
\end{prop}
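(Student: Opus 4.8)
The plan is to relate the vanishing of the barycenter of $P$ to the classical Futaki invariant via the Ricci potential $h$ of the Guillemin metric $\omega_0$. Recall that the Futaki invariant is the map $\mathcal F\colon \eta(X)\to\CC$ on the space of holomorphic vector fields given by $\mathcal F(w)=\int_X w(h)\,\omega_0^n$, and it is well known (and still valid on Fano orbifolds, since Hodge theory holds there by \cite{Bai2}) that this is independent of the choice of K\"ahler form in $2\pi c_1(X)$; in particular $\mathcal F\equiv 0$ iff it vanishes on the torus $T_\CC$. So it suffices to compute $\mathcal F(v)$ for $v=\sum_i c_i z^i\frac{\p}{\p z^i}$ and the Guillemin metric, and to show $\mathcal F\equiv 0$ on $\eta(X)$ is equivalent to the barycenter of $P$ being the origin.

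First I would reduce everything to an integral over the polytope. Using $x^i=\log|z^i|^2$ and the identification of $T$-invariant functions with functions on $\RR^n$, we have $\omega_0^n = c_n\det(D^2\phi^0)\,dx$ for a dimensional constant $c_n$, and $v(h)=\sum_i c_i\,h_i$ where $h_i=\partial h/\partial x^i$. From the excerpt we already have $\log\det(D^2\phi^0)+\phi^0+h=\tilde c$, hence $h_i=-\phi^0_i-(\log\det D^2\phi^0)_i$. Substituting and pushing forward under the Legendre/gradient map $y=D\phi^0(x)$, which is a diffeomorphism $\RR^n\to P^o$ with $y^i=\phi^0_i$, the term $\int_{\RR^n} \phi^0_i \det(D^2\phi^0)\,dx$ becomes $\int_P y^i\,dy$, while the term $\int_{\RR^n}(\log\det D^2\phi^0)_i\det(D^2\phi^0)\,dx$ is a pure divergence in $x$ (a computation essentially identical to the one in Proposition \ref{prop:c_i}, integrating $\frac{\partial}{\partial x^i}\big(\log\det(D^2\phi^0)\cdot e^{-\phi^0}\cdot(\cdots)\big)$ — more precisely one uses that $(\log\det D^2\phi^0)_i$ integrated against the volume form is a total derivative) and therefore vanishes by the decay of $\phi^0$ coming from Lemma \ref{lem:asym}. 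The upshot is $\mathcal F(v)=-c_n\sum_i c_i\int_P y^i\,dy$, i.e. $\mathcal F(v)$ is, up to a nonzero constant, the directional derivative of the function $y\mapsto \tfrac12|y|^2$ averaged over $P$ in the direction $(c_1,\dots,c_n)$, which is $-c_n\,\langle (c_1,\dots,c_n),\,\mathrm{bar}(P)\rangle\cdot\mathrm{vol}(P)$ where $\mathrm{bar}(P)=\frac{1}{\mathrm{vol}(P)}\int_P y\,dy$.

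From here the equivalence is immediate in one direction: if $\mathrm{bar}(P)=0$ then $\int_P y^i\,dy=0$ for all $i$, so $\mathcal F(v)=0$ for the particular $v$ coming from any choice of $c_i$; but in fact the same computation with an arbitrary $w=\sum_i a_i z^i\frac{\p}{\p z^i}$ in place of $v$ gives $\mathcal F(w)=-c_n\sum_i a_i\int_P y^i\,dy$, so $\mathrm{bar}(P)=0$ forces $\mathcal F\equiv 0$ on the torus, hence on all of $\eta(X)$ by invariance. Conversely, if $\mathcal F\equiv 0$ then taking $w=z^i\frac{\p}{\p z^i}$ for each $i$ gives $\int_P y^i\,dy=0$ for all $i$, i.e. $\mathrm{bar}(P)=0$.

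The main obstacle I expect is justifying the integration by parts that kills the $\int_{\RR^n}(\log\det D^2\phi^0)_i\det(D^2\phi^0)\,dx$ term: one must check the boundary terms at infinity vanish, which requires the asymptotics of $\phi^0$ and $D^2\phi^0$ near the faces of $P$. This is exactly the kind of estimate packaged in Lemmas \ref{lemma:logdetD2} and \ref{lem:asym} — $\phi^0$ grows linearly like the support function $v(x)$ of $P$, so $e^{-\phi^0}$ decays and $\log\det D^2\phi^0 = -\phi^0-h+\tilde c$ grows at most linearly, which is enough to control the flux through large spheres. I would phrase the argument so that it parallels the divergence computation already carried out in the proof of Proposition \ref{prop:c_i}, where the decay of $e^{-\phi}$ does the same job; the only new point is that here we also carry a factor that grows linearly, still dominated by the exponential decay.
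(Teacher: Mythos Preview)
Your computation of $\mathcal F$ on torus vector fields is essentially correct and in fact more explicit than the paper, which simply says ``a direct computation shows that up to a constant factor, $\mathcal F(v_i)$ is exactly $\int_P y^i\,dy$''. Your use of $h=\tilde c-\phi^0-\log\det D^2\phi^0$, the change of variables $y=D\phi^0(x)$, and the observation that $\int_{\RR^n}(\log\det D^2\phi^0)_i\det(D^2\phi^0)\,dx=\int_{\RR^n}\partial_i\bigl(\det D^2\phi^0\bigr)\,dx$ vanishes by the exponential decay from Lemmas~\ref{lemma:logdetD2} and~\ref{lem:asym} is a clean way to carry this out.

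There is, however, a genuine gap in the reduction step. You assert that ``$\mathcal F\equiv 0$ iff it vanishes on the torus $T_\CC$'' as if this followed from the independence of the K\"ahler form, and later you write ``hence on all of $\eta(X)$ by invariance''. Neither phrase is an argument: independence of the metric says nothing about which subspace of $\eta(X)$ determines $\mathcal F$, and ``invariance'' is left unspecified. The missing idea, which the paper supplies, is algebraic. By Cox's theorem (\cite{Co}) the complex torus $T_\CC$ is a \emph{maximal} torus of $\Aut(X)$, so the Lie algebra $\mathfrak g(X)$ admits a root-space decomposition $\mathfrak g(X)=\mathfrak h(X)\oplus\sum_i\CC\,w_i$ with respect to $\mathfrak h(X)=\mathrm{Lie}(T_\CC)$. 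Each root vector $w_i$ is a nonzero eigenvector of some $ad_v$, $v\in\mathfrak h(X)$, with eigenvalue $\lambda_{v,i}\neq 0$ (otherwise $w_i$ would commute with all of $\mathfrak h(X)$, contradicting maximality). Hence $w_i=\lambda_{v,i}^{-1}[v,w_i]\in[\mathfrak g(X),\mathfrak g(X)]$, and since the Futaki invariant is a Lie algebra character one gets $\mathcal F(w_i)=0$ automatically. This is what forces $\mathcal F$ to be determined by its values on $\mathfrak h(X)$; without it your equivalence is only proved on the torus directions.
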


\begin{proof}
  First, we use a theorem of Cox in \cite{Co}, namely, for the toric orbifold $X$ the maximal
  torus of $Aut(X)$ is exactly $T$. Let $\mathfrak{g}(X)$ be the Lie
  algebra of $Aut(X)$, consisting of holomorphic vector fields on
  $X$, and let the Cartan decomposition of $\mathfrak{g}(X)$ be
  $$\mathfrak{g}(X)=\mathfrak{h}(X)+\sum_i \CC w_i,$$
  where $\mathfrak{h}(X)$ is the Lie algebra of $T_\CC$, generated
  by $v_i=z^i\frac{\p}{\p z^i}$, $i=1,\dots, n$, and the $w_i$'s are
  the common eingenvectors of the adjoint actions $ad_v$ for $v\in
  \mathfrak{h}(X)$. For any $w_i$, there must be a $v\in \mathfrak{h}(X)$
  such that $ad_v(w_i)=\l_{v,i}w_i$ with $\l_{v,i}\neq 0$, for
  otherwise $w_i$ commutes with the whole of $\mathfrak{h}(X)$, contradicts with
  the fact that $\mathfrak{h}(X)$ is a maximal abelian subalgebra of
  $\mathfrak{g}(X)$.

    The Futaki invariant on a normal Fano orbifold is discussed in \cite{DT}. Now note that the Futaki invariant $\mathcal{F}$ vanishes on
    $[\mathfrak{g}(X),\mathfrak{g}(X)]$ as in the smooth case,
    we have
    $$\mathcal {F}(w_i)=\l_{v,i}^{-1}\mathcal{F}([v,w_i])=0.$$
    But a direct computation shows that up to a constant factor, $\mathcal{F}(v_i)$ is exactly $\int_P y^i
    dy$. The proposition follows from this fact.
\end{proof}
 \mbox{}

\section{Existence of K\"ahler-Ricci solitons}\label{section:existenc}

As in \cite{WZ}, we use the continuity method to  consider a family
of  equations,  \beq \label{eqn:continuity} \det(g^0_{i\bar
j}+\varphi_{i\bar j})=\det (g^0_{i\bar j})e^{h-\theta_v
-v(\varphi)-t\varphi} \eeq
 with parameter  $t\in [0,1]$. Then $\phi$ satisfies
the equation

\beq \label{eqn:continuity-toric} \det(\phi_{i j})=e^{-c-w-\sum_i
c_i\phi_i} \eeq
 in $\RR^n$, where
\beq\label{eqn:w} w=w_t:=t\phi+(1-t)\phi^0. \eeq  As in \cite{WZ}
and \cite{TZ1}, it suffices to obtain a uniform estimate for
$\phi-\phi^0$ when $t\in [\varepsilon_0,1]$.\\

The estimate is almost identical to that of \cite{WZ}, for readers'
convenience, we include it briefly here.

{\lem\label{lemma:m_t} Let $m_t:=\inf_{x\in \RR^n} w_t(x)$, then we
have
$$|m_t|\leq C$$
for some constant $C$ independent of $t$. }

\begin{proof}
 The proof is the same to that of \cite{WZ}. First, note that the image of the gradient map $D\phi$ is also
the interior of the polytope $P$. By the equation
(\ref{eqn:continuity-toric}) and the properties of Legendre
transform, we have \beq \int_{\RR^n} e^{-w}=\int_{\RR^n}
\det(\phi_{ij})e^{c+\sum_i c_i\phi_i}dx=e^c\int_P e^{\sum_i c_i
y^i}dy=:\beta. \eeq Since $|Dw|\leq d_0:=\sup\{ |y|\ \big{|}\ y\in
P\}$, we have
$$vol(B_1(x^t))e^{-m_t-d_0}\leq \beta,$$
thus $m_t\geq C$, for some constant $C$ independent of $t$.

Next we derive the upper bound of $m_t$. Let $A_\l:=\{x\in \RR^n
\big{|} w(x) \leq m_t+\l\}$. Then as in \cite{WZ}, we have
$vol(A_1)\leq C e^{\frac{m_t}{2}}$. Then by convexity of $w$, we
know that for any $\l>1$ we have $vol(A_\l)\leq C\l^n
e^{\frac{m_t}{2}} $, thus we can show that
$$\beta \leq C'e^{-\frac{m_t}{2}},$$
hence $m_t\leq C$.

\end{proof}

{\lem\label{lemma:x^t} Let $x^t\in \RR^n$ be the unique point such
that $w_t(x^t)=m_t$, then we have
$$|x^t|\leq C$$
for some constant $C$ independent of $t$. }

\begin{proof}
 First note that  $vol(A_1)\leq C$ by the proof of Lemma \ref{lemma:m_t}, and since $|Dw|\leq d_0$, there is a ball
 centered at $x^t$ with fixed size contained in $A_1$. If $A_1$ contains a point $x$ with $|x-x^t|$ large, then by convexity
of $A_1$, the volume of $A_1$ will also be large. So we can choose a
$R>0$ independent of $t$ such that $A_1\subset B_R(x^t)$.

Also by convexity of $w$, we have
$$|Dw|>\frac{1}{R}\quad {\text in\  } \RR^n\setminus B_R(x^t).$$
Hence for any $\varepsilon>0$ samll, we can find a sufficiently
large $R_{\varepsilon}$ (independent of $t$) such that
$$\int_{\RR^n\setminus B_{R_\varepsilon}(x^t) }e^{-w} dx \leq \varepsilon. $$

On the other hand, for any $\varepsilon>0$ small, we can find a
large constant $C>0$ such that if $|x^t|>C$, we have
$$\xi\cdot D\phi^0 >\frac{a_0}{2}   \quad {\text in\ } B_{R_{\varepsilon}}(x^t), $$
where $\xi=x^t/|x^t|$, and $a_0:=\inf\{|y|\big{|} y\in \p P\}$.
Hence for $\varepsilon$ sufficiently small, one has
$$\int_{\RR^n}\xi\cdot D\phi^0 e^{-w} dx >0.$$
However, by (\ref{eqn:c_i}) and (\ref{eqn:continuity-toric}), we
have \beqs
0 &=& \int_P y^i \exp(\sum_l c_ly^l) dy\\
  &=& \int_{\RR^n} \phi_i \exp(\sum_l c_l\phi_l) \det D^2\phi dx\\
  &=& e^{c} \int_{\RR^n} \phi_i e^{-w} dx\\
  &=& -\frac{1-t}{t}  e^{c} \int_{\RR^n} \phi^0_i e^{-w} dx.
\eeqs Thus $$ \int_{\RR^n} \xi\cdot D\phi^0 e^{-w} dx=0,$$ which is
a contradiction.

\end{proof}

{\prop \label{prop: upper} Let $\varphi=\varphi_t$, where $t\in
[\varepsilon_0,1]$, be a solution of (\ref{eqn:continuity-toric}),
then
$$\sup_X \varphi \leq C$$
for some constant $C$ independent of $t$. }

\begin{proof}
By Lemma \ref{lemma:m_t} and Lemma \ref{lemma:x^t}, we know that
$|w(0)|\leq C$, so $|\phi_0|\leq C$ for $t\in [\varepsilon_0,1]$.
From Lemma \ref{lem:asym},
 we have a function $v$, whose gragh is the asymptotical cone of the graph of $\phi^0$. Since $D\phi^0(\RR^n)=D\phi(\RR^n)$, we have
$$\phi(x)-\phi(0)\leq v(x)-v(0).$$
So we have
$$\varphi=\phi-\phi^0\leq v-\phi^0+\phi(0)-v(0).$$
Again by Lemma \ref{lem:asym}, we have $\sup_X
\varphi=\sup_{\RR^n}\varphi\leq C$.
\end{proof}

Next we need a Harnack type theorem to control the infimum of
$\varphi$. Here we use an idea of Donaldson \cite{Do}, to prove it
via the ordinary Sobolev imbedding theorem on $P$.

{\prop \label{prop: lower} Let $\varphi$ be as in Proposition
\ref{prop: upper}, then we have
$$\inf_X \varphi \geq -C$$
for some constant $C$ independent of $t$. }

\begin{proof}
 Let the Legendre transform of $\phi$ be $u$. By definition, we have
$$u(y)=\sup_{\tilde x\in \RR^n} (\tilde x\cdot y-\phi(\tilde x)).$$Then one can check easily that
$$\sup_{\RR^n}(\phi^0-\phi)=\sup_P (u-u^0).$$
Actually, suppose for $y\in P^o$, $x\in \RR^n$ is the unique point
such that $u(y)=x\cdot y -\phi(x)$, then we have \beqs u(y)-u^0(y)
&=& x\cdot y -\phi(x)-\sup_{\tilde x\in \RR^n} (\tilde x\cdot
y-\phi^0(\tilde x))\\
&\leq & x\cdot y -\phi(x)-x\cdot y +\phi^0(x)=
\phi^0(x)-\phi(x)\\
&\leq & \sup_{\RR^n}(\phi^0-\phi).\eeqs Thus we get $\sup_P
(u-u^0)\leq \sup_{\RR^n}(\phi^0-\phi)$, and the same argument
implies that $\sup_{\RR^n}(\phi^0-\phi)\leq \sup_P (u-u^0)$.

Now it suffices to bound $u$ on $P$.

The idea is to bound $\parallel Du\parallel_{L^p(P)}$ for $p>n$,
then by the Sobolev embedding theorem on $P$, we get the estimate of
${\text osc}_P u$.

Note that
$$\int_{P} |Du|^p dy= \int_{\RR^n} |x|^p \det (\phi_{ij}) dx\leq C\int_{\RR^n} |x|^p e^{-w} dx. $$
Take $R$ as in the proof of Lemma \ref{lemma:x^t}, then out of
$B_R(x^t)$, we have
$$w(x)\geq m_t+1+\frac{1}{R}|x-x^t|,$$
thus $$w(x)\geq \epsilon |x-x^t|-C\quad {\text in\ }\RR^n$$ for some
constants $\epsilon$ and $C$ independent of $t$. Now it is obvious
that we have
$$\parallel Du\parallel_{L^p(P)}\leq C.$$
Now by (7.45) of \cite{GT}, we have
$$\|u-u_P\|_{W^{1,p}}\leq C,$$
where $u_P:=\frac{1}{vol(P)}\int_P u dy$ is the average of $u$ over
$P$. Then since the boundary of $P$ is Lipschitz, we have the
Sobolev imbedding
$$\sup_P |u-u_P|\leq C,$$
and hence
$$osc_P u \leq 2C.$$
So the proposition is true. \end{proof}

Proposition \ref{prop: upper} and \ref{prop: lower} complete the
proof of Theorem \ref{thm:main}.

\begin{ex}
  Let $Q\subset N_\RR\cong \RR^2$ be a lattice polytope, whose vertices are
  $(1,0), (0,1)$ and $(-2,-1)$. Then the corresponding toric variety $X_Q$ is a Fano orbifold with
  one singular point which is an ordinary double point. Actually, $X_Q$ coincides with
  ``A-1" in Nakagawa's table on page 240 of \cite{Na}. One can
  check easily that $X_Q$ is a global quotient of $\CC P^2$. Note that the Fubini-Study
  metric descents to $X_Q$, but it is singular along a divisor.
  Actually, since the barycenter of the dual polytope $P$ is not the origin,
   the Futaki invariant of $X_Q$ is not zero, so $X_Q$ does
  not admit a K\"ahler-Einstein metric. However, by Theorem \ref{thm:main},
  $X_Q$ admits a K\"ahler-Ricci soliton metric.
\end{ex}
\mbox{}

Now we give an example of toric Fano variety with an invariant
K\"ahler-Einstein metric, whose anticanonical divisor is not
Cartier.

\begin{ex}
  Let $Q\subset N_\RR\cong \RR^2$ be a lattice polytope, whose vertices are
  $(-2,-1), (-2,1), (2,-1)$ and $(2,1)$. Then $X_Q$ is a toric Fano
  orbifold. $-K_{X_Q}$ is not Cartier but $-2K_{X_Q}$ is. One can also check
  that $X_Q$ is a global quotient of the sueface ``B-2" in Nakagawa's table.
   The dual polytope of $Q$ is
  $$P=\{y\in M_\RR |~ l_i(y)\geq 0,~i=1,2,3,4\},$$
  where $l_1(y)=-2y^1-y^2+1$, $l_2(y)=-2y^1+y^2+1$, $l_3(y)=2y^1-y^2+1$
  and $l_4(y)=2y^1+y^2+1$. Obviously, the barycenter of $P$ is the
  origin. By Theorem \ref{thm:main}, $X_Q$ admits a K\"ahler-Einstein
  metric.
\end{ex}
\mbox{}\\

\end{document}